\documentclass[twoside,12pt,reqno]{amsart}
\usepackage{amsmath,amscd,amssymb, epsfig}

\newcommand{\C}{\mathbb{C}}
\newcommand{\N}{\mathbb{N}}
\newcommand{\sll}{\mathfrak{sl}}

\newcommand{\qum}[1]{\widetilde{#1}}

\newcommand{\g}{\ensuremath{\mathfrak{g}}}

\newcommand{\Vbar}{\ensuremath{\overline{V}}}

\newcommand{\what}[1]{\widehat{#1}}

\def\M{{\mathcal{M}}}
\def\T{{\mathcal{T}}}
\newcommand{\Atangle}{\ensuremath{A}}
\newcommand{\A}{\ensuremath{\mathcal{A}}}
\newcommand{\Uh}{\ensuremath{U_{h}(\mathfrak{g}) } }

\newcommand{\slmn}{\sll(m|n)}
\newcommand{\osp}{\mathfrak{osp}(2|2n)}

\newtheorem{definition}{Definition}
\newtheorem{theorem}[definition]{Theorem}

\newtheorem{lemma}[definition]{Lemma}

\newcommand{\epsh}[2]
         {\begin{array}{c} \hspace{-1.0mm} 
        \raisebox{-4pt}{\epsfig{figure=#1.eps,height=#2}}
        \hspace{-1.0mm}
        \end{array}}

\begin{document}
\let\co=\comment \let\endco=\endcomment
\title[The Kontsevich integral and re-normalized link invariants]{The Kontsevich integral and re-normalized link invariants arising from Lie superalgebras}
\author{Nathan Geer}
\address{School of Mathematics\\
  Georgia Institute of Technology\\
  Atlanta, GA 30332-0160, USA\\
  and\\
 Max-Planck-Institut f\"ur Mathematik\\
Vivatsgasse 7\\
53111 Bonn, Germany} \email{geer@math.gatech.edu} 
  \date{\today}

 \keywords{Vassiliev invariants, weight system, Kontsevich integral, Lie superalgebras, quantum invariants}
 
\begin{abstract}
We show that the coefficients of the re-normalized link invariants of \cite{GP2} are Vassiliev invariants which give rise to a canonical family of weight systems. 
\end{abstract}

\maketitle
\setcounter{tocdepth}{1}

\section*{Introduction}
Given a sequence of finite dimensional representations $\Vbar=\{V_1,V_2,...\}$ of a finite dimensional semisimple Lie algebra $\g$ one can construct the following two invariants of links (with ordered components): 
\begin{enumerate}
	\item the Reshetikhin-Turaev $\C[[h]]$-valued quantum group invariant $Q_{\g,\Vbar}$ which arises from $\Vbar$ and the Drinfeld-Jimbo quantization associated to $\g$ (see \cite{RT}),\label{I:PaperQG}
	\item $W_{\g,\Vbar} \circ Z$ where $W_{\g,\Vbar}$ is a weight system, constructed by Bar-Natan in \cite{BN2}, and where $Z$ is the Kontsevich integral \cite{Kont}. \label{I:PaperWS}
\end{enumerate}
Here a link or chord diagram (with ordered components) is colored by assigning the $i$th representation $V_i$ to its $i$th component.  The above constructions are essentially the same in the following sense.   Lin \cite{Lin} showed that the $m$th coefficient of $Q_{\g,\Vbar}$ is a Vassiliev invariant of type $m$.  Moreover, there is a weight system corresponding to  $Q_{\g,\Vbar}$ which can be shown to be equal to $W_{\g,\Vbar}$.  Conversely, Le and Murakami \cite{LM} show that $W_{\g,\Vbar}$ is canonical, i.e. the invariant $W_{\g,\Vbar}\circ Z$ is equal (up to a change of variable and normalization) to  $Q_{\g,\Vbar}$.    

In \cite{G04B} it is shown that there are analogous results for Lie superalgebras of type A-G.   The theory of Lie superalgebras has properties which create new challenges and interesting consequences.  First, the proof of Le and Murakami uses results, due to Drinfeld, whose proofs are based on properties of Lie algebras which fail for Lie superalgebras.  In \cite{G04B} the author over comes this difficulty by giving a proof which uses new quantum group results.  Second, for many sequence of representations $\Vbar$ of a Lie superalgebra $\g$ the quantum invariant $Q_{\g,\Vbar}$ is zero (see \cite{GP2} and the referernce within).  However, in \cite{GP2} it is shown that the usual quantum invariants associated to Lie superalgebras of type I can be re-normalized by modified quantum dimensions which lead to non-trivial invariants of links.   These invariants contain multivariable invariants which specialize to the multivariable Conway potential function.  In this paper we will show that the coefficients of these re-normalized invariants are Vassiliev invariants which give rise to canonical weight systems.  We will discuss how these results suggest that there is a natural choice for the modified quantum dimensions for quantized Lie superalgebras of type I.

\subsection*{Acknowledgments}    
I would like to thank Bertrand Patureau-Mirand for helpful discussions.   This work has been partially supported by the NSF grant DMS-0706725.   


\section{Quantum $\g$ and its associated ribbon function}
Throughout all links and tangles will have components which are ordered, framed and oriented.  Let $\g$ be a Lie superalgebras of type I and let $h$ be an indeterminate.  Let $\Uh$ be the braided quantized Lie superalgebra over $\C[[h]]$ associated to $\g$ (see \cite{GP2} and references within).    We say a $\Uh$-module $W$ is topologically free of finite rank if it is isomorphic as a $\C[[h]]$-module to $V[[h]]$, where $V$ is a
finite-dimensional $\g$-module. 
The set of isomorphism classes of irreducible finite-dimensional
$\g$-modules are in one to one correspondence with the set of dominant
weights.   Each highest weight
$\g$-module $V$ can be deformed to a highest weight topologically free
$\Uh$-module $\qum{V}$ which is equal to $V[[h]]$.
 
 Let $\M$ the category of
topologically free of finite rank $\Uh$-modules.  A standard argument
shows that $\M$ is a ribbon category (for details see \cite{G04B}). 
Let $\T=Rib_\M$ be the ribbon category of framed oriented tangles colored by elements of $\M$ in
the sense of Turaev (see \cite{Tu}).   Let $F$ be the usual ribbon
functor from $\T$ to $\M$ (see \cite{Tu}).

\section{The Kontsevich integral and (1,1)-tangle invariants arising from $\g$.}
In this section we will recall that the quantum invariants arising from representations of  $\g$ are equal to the composition of the Kontsevich integral and certain weight systems. 

We recall the notions of Vassiliev invariants, for more details see \cite{BN2, Kas, G04B}.  To make a consistent theory of Vassiliev invariants of framed links we restrict to framed links with even framings.  

By a \emph{singular link} we mean a link with a finite number of self-intersections, each having distinct tangents.  Any numerical link invariant $f$ can be inductively extended to an invariant of singular link according to the rule
$$
   \put(-10,-2){{\Large $f$}}  \epsh{ExtDoublePoint}{7ex} \put(-60,-2){{\Large $f$}} \put(-149,-2){{\Large $f$}}
$$

A \emph{Vassiliev invariant} \cite{Vas} of type $m$ is a framed link invariant whose extension vanishes on any framed singular link with more than $m$ double points.  
Similarly, a Vassiliev $(1,1)$-tangle invariant of type $m$ is a framed $(1,1)$-tangle invariant whose 
extension vanishes on any framed singular $(1,1)$-tangle with more than $m$ double points.

Let  $\Vbar=\{V_1,V_2,...\}$  be a sequence of simple finite dimensional representations of $\g$.   Let $\what{Q}_{\g,\Vbar}$ be the Reshetikhin-Turaev type $\C[[h]]$-valued quantum group invariant of $(1,1)$-tangles associated to $\g$ and $\Vbar$.  Let us briefly describe how this invariant is defined, for more details see \cite{G04B}.  Let $T$ be a  $(1,1)$-tangle and let us assume that the open component is labeled with $1$ and is oriented down.  Color the $i$th component of $T$ with $\qum{V}_i$ then $F(T)$ is an endormorphism of $\qum{V}_1$.   Since $V_1$ is simple it follows that this endomorphim is a scalar times the identity.  Then $\what{Q}_{\g,\Vbar}(T)$ is defined to be this scalar.

 The pair $(\g,\Vbar)$ also defines a weight system as follows.   Let $T$ be a tangle.
A chord diagram on $T$ of degree $m$ is the tangle $T$ with a distinguished set of $m$ unordered pairs of points of $T \backslash \partial T$, considered up to homeomorphisms preserving each connected component and the orientation.   Let $\Atangle(T)$ be the vector space with basis given by all chord diagrams on $T$ modulo the four term relation.

We will now describe the category of chord diagrams on tangles, which we denote as $\A$.  The objects of $\A$ are the empty set and finite sequences of pairs $(\epsilon,i)$ where $\epsilon=\pm$ and $i\in\N$.  The morphisms of $\A$ are elements of $\Atangle(T)$ for some tangle $T$.  Here each pair  $(\epsilon,i)$ is associated to a point in the boundary of the tangle, where $\epsilon$ and $i$ correspond to the orientation and the labeling of the component, respectively.   As in \cite{G04B} the category $\A$ is a strict infinitesimal symmetric category with duality. 
  
Let $U(\g)$-mod be the category of finite dimensional $\g$-modules.  As shown in \cite{G04B}, 
$U(\g)$-mod is a strict inÞnitesimal symmetric category with duality (here we fix the standard non-degenerate supersymmetric invariant even 2-tensor).  The following lemma is well known (see \cite{Kas}).
\begin{lemma}
 There exists  a unique functor
\begin{equation}
\label{E:FuncG}
G_{\g,\Vbar}:\A \rightarrow U(\g)\text{-}Mod
\end{equation}
preserving the tensor product, symmetry, infinitesimal braiding and the duality such that $G_{\g,\Vbar}((+,i))=V_i$.
\end{lemma}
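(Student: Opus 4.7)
The plan is to proceed by generators and relations for the strict infinitesimal symmetric category $\A$. As explained in \cite{Kas, G04B}, $\A$ is generated, as such a category with duality, by a small set of elementary chord diagrams: cups and caps realizing the duality on each object, the symmetry of two adjacent strands, and a single chord joining two parallel strands---the infinitesimal braiding. The relations are those making $\A$ a strict symmetric monoidal category with duality, the naturality of the infinitesimal braiding with respect to the symmetry, and the four-term (\textbf{4T}) relation inherited from $\Atangle(T)$.

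First I would define $G_{\g,\Vbar}$ on objects by $(+,i)\mapsto V_i$, $(-,i)\mapsto V_i^*$, extending multiplicatively to finite sequences and sending the empty sequence to the ground ring. On the generating morphisms I send the cups and caps to the canonical evaluation and coevaluation of $U(\g)$-mod, the symmetry to the graded flip carrying the Koszul sign $(-1)^{|v||w|}$, and the single chord connecting the $a$th and $b$th strands to the action of the Casimir tensor
\begin{equation*}
t=\sum_k x_k\otimes x^k\in\g\otimes\g
\end{equation*}
on those two tensor factors, where $\{x_k\},\{x^k\}$ are dual bases with respect to the fixed non-degenerate supersymmetric invariant even 2-tensor on $\g$. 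By construction the candidate functor preserves tensor product, symmetry, infinitesimal braiding, and duality, and its values on the generators are completely forced by these preservation properties together with $G_{\g,\Vbar}((+,i))=V_i$, which gives uniqueness.

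The substantive step is verifying that the defining relations of $\A$ are respected in $U(\g)$-mod. The strict symmetric monoidal and duality relations hold automatically once the graded flip is used; naturality of the infinitesimal braiding with respect to the symmetry follows from the $\g\otimes\g$-symmetry of $t$. The one nontrivial relation is the \textbf{4T} relation, which translates in $U(\g)^{\otimes 3}$ to the identity
\begin{equation*}
[t_{12}+t_{13},\,t_{23}]=0,
\end{equation*}
the infinitesimal version of ad-invariance. This is a direct consequence of the invariance of the supersymmetric bilinear form defining $t$, interpreted in the super sense.

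The main obstacle, and the reason the superalgebra case is not purely formal, is the sign bookkeeping inherent in the Koszul rule: the braiding must be the graded flip, the bilinear form must be supersymmetric so that $t$ lies in the even part $(\g\otimes\g)_{\bar 0}$, and the \textbf{4T} computation reduces to ad-invariance only after the Koszul signs cancel correctly. Once these conventions are made consistent with the definition of $\A$ as an infinitesimal symmetric category given in \cite{G04B}, all the relations are preserved and the functor $G_{\g,\Vbar}$ exists and is unique.
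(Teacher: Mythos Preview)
Your proposal is correct and follows exactly the standard generators-and-relations argument for constructing such a functor; this is the proof one finds in \cite{Kas} (adapted to the graded setting as in \cite{G04B}). The paper itself does not supply a proof of this lemma: it simply records the statement as well known and cites \cite{Kas}, so there is nothing further to compare.
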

Let $\Atangle(1,1)$ be the vector space of chord diagrams on $(1,1)$-tangles modulo the four term relation (here we will assume the open component is labeled with $1$).  Let $D\in \Atangle(1,1)$ then by construction we have $G_{\g,\Vbar}(D)$ is an endomorphism of $V_1$ and thus a complex number times the identity.  Define $\what{W}_{\g,\Vbar}(D)$ to be this complex number.   

\begin{theorem}\label{T:QWZ11} 
We have
\begin{enumerate}
  \item the $m$\textsuperscript{th} coefficient of $\what{Q}_{\g, \Vbar}$ is a Vassiliev invariant of type $m$,\label{TI:Vas}
  \item  $\what{Q}_{\g, \Vbar}=\what{W}_{\g,\Vbar}\circ Z$,\label{TI:WZQ}
  \item \label{TI:Vas3} the weight systems corresponding to  $\what{Q}_{\g, \Vbar}$ are equal to the family   $\what{W}_{\g,\Vbar}$.
\end{enumerate}
\end{theorem}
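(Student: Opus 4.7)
The plan is to adapt the Lin--Bar-Natan--Le-Murakami strategy from closed links to the $(1,1)$-tangle setting, leaning on the Lie superalgebra version of the Le-Murakami theorem established in \cite{G04B}.

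For part (\ref{TI:Vas}), I would follow Lin's standard expansion argument. The universal $R$-matrix of $\Uh$ has the form $R = 1\otimes 1 + h\cdot r + O(h^2)$, so $R - R^{-1} \in h\cdot \Uh^{\otimes 2}[[h]]$. If $T$ is a singular $(1,1)$-tangle with $k$ double points, functoriality of $F$ expresses $\what{Q}_{\g,\Vbar}(T)$ as a sum of compositions each containing $k$ factors of $R - R^{-1}$, and hence $\what{Q}_{\g,\Vbar}(T) \in h^{k}\C[[h]]$. The coefficient of $h^m$ therefore vanishes whenever $k>m$, which is exactly the Vassiliev property of type $m$.

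For part (\ref{TI:WZQ}), the key input is the $(1,1)$-tangle version of the Le-Murakami theorem. The closed-link theorem proved in \cite{G04B} is naturally functorial: it identifies, via the infinitesimal symmetric category framework, the two functors $F:\T\to\M$ and $G_{\g,\Vbar}\circ Z$ (the latter extended $\C[[h]]$-linearly, using the identifications $\qum{V}_i \cong V_i[[h]]$). Evaluating this functorial identity on a $(1,1)$-tangle $T$ whose open component is colored $V_1$, both sides yield endomorphisms of $\qum{V}_1$; simplicity of $V_1$ forces both to be scalar multiples of the identity, and reading off the scalars produces (\ref{TI:WZQ}). For part (\ref{TI:Vas3}), once (\ref{TI:WZQ}) is in hand, the weight system associated to the Vassiliev invariant $\what{Q}_{\g,\Vbar,m}$ is obtained by evaluating it on singular tangles realizing prescribed chord diagrams; using that $Z$ sends a singular $(1,1)$-tangle with exactly $m$ double points to its chord diagram plus higher-degree terms, the weight system of $\what{Q}_{\g,\Vbar,m}$ coincides with the degree-$m$ part of $\what{W}_{\g,\Vbar}$.

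The main obstacle is establishing the functorial equality required for (\ref{TI:WZQ}). In the Lie algebra case, Le-Murakami's argument relies on Drinfeld's quasi-Hopf machinery and associator identities which, as noted in the Introduction, do not extend directly to the super setting; \cite{G04B} circumvents this through a different quantum-group route but states the conclusion only for closed links. The work of this step is to confirm that the argument of \cite{G04B} genuinely produces an equality of functors on the full tangle category, from which the $(1,1)$-tangle statement is immediate. A naive reduction via trace closure should be resisted: for $\g$ of type~I the quantum dimension $\dim_q(\qum{V}_1)$ may vanish---this is precisely the phenomenon motivating the re-normalization of \cite{GP2}---so one cannot recover the tangle-level equality by dividing the closed-link identity by $\dim_q(\qum{V}_1)$, and the argument must stay at the functorial level throughout.
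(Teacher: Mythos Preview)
Your proposal is correct and follows essentially the same route as the paper: both reduce the theorem to the results of \cite{G04B} (the paper cites Theorems~5.2 and~5.5 there directly), and the underlying argument you sketch---Lin's $R-R^{-1}\in h\,\Uh^{\otimes 2}[[h]]$ expansion for part~(\ref{TI:Vas}), the functorial Le--Murakami identification $F\simeq G_{\g,\Vbar}\circ Z$ for part~(\ref{TI:WZQ}), and the standard extraction of weight systems for part~(\ref{TI:Vas3})---is precisely what those theorems contain.

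One small discrepancy worth noting: you locate the needed adaptation in passing from closed links to $(1,1)$-tangles, whereas the paper identifies the only new wrinkle as allowing several distinct colors $V_1,V_2,\dots$ rather than a single module. In fact the argument in \cite{G04B} is already functorial on the tangle category (so your concern about recovering the open-string statement from a closed-link identity by dividing by a possibly vanishing $\dim_q\qum{V}_1$ does not arise), and the genuine, though routine, extension is checking that the infinitesimal-symmetric-category machinery and the functor $G_{\g,\Vbar}$ behave as expected when different components carry different representations. Your cautionary remark about the vanishing quantum dimension is nonetheless well placed as motivation for why the $(1,1)$-tangle formulation is the right one in this paper.
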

\begin{proof}
The proof of this theorem is almost identical to the proof of Theorems 5.2 and 5.5 in \cite{G04B}.  The only difference is that here links are allowed to be colored with more than one module.  One can check that the proofs of \cite{G04B} can easily be adapted to compensate for this difference and so we will not repeat the proof here.
\end{proof}

\section{Re-normalized link invariants}

Let $\g$ be a Lie superalgebra of type I, i.e.
$\g$ is equal to $\slmn$ or $\osp$.  Here we assume that $m\neq n$.
Let $r$ be equal to $m+n-1$ if $\g=\slmn$ and $n+1$ if $\g=\osp$.  
The set of isomorphism classes of irreducible finite-dimensional
$\g$-modules are parameterized by $\N^{r-1}\times\C$ and are
divided into two classes: typical and atypical.   For $a\in\N^{r-1}\times\C$ we denote the corresponding $\g$-module by $V(a)$.   We say $\qum{V}$ is a typical $\Uh$-module if $V$ is a
typical $\g$-module.

If $V$ is a typical $\g$-module then the super-dimension of $V$ is zero and so it follows that the quantum dimension of $\qum{V}$ is zero.  Thus, it follows that if $L$ is a link colored with elements of $\M$ such that at least one of these colors is a typical $\Uh$-module then $F(L)=0$.  

We will now explain how to use $F$ to construct a non-zero link invariant.  
Let $V$ be a typical $\Uh$-module.  If $T_{V}$ is a framed $(1,1)$-tangle colored by
 $\Uh$-modules such that the open string is colored by  a typical module $V$, then
$F(T_{V})=x Id_{V}$ for some $x$ in $\C[[h]]$.  Let us set $<F(T_{V})>=x$.
 In \cite{GP2}  Geer and Patureau define a map $d$ from the set of typical representations of $\Uh$ to the ring
$\C[[h]][h^{-1}]$.  Let us rescale $d$ by $h^{|\Delta_{\bar1}^+|}$ where $\Delta_{\bar1}^+$ is the set of odd positive roots of $\g$.  We will still denote this rescaled function by $d$, then $d$ takes values in $\C[[h]]$.  The assignment $T_V\mapsto d(V)<F(T_{V})>$  induces a well defined invariant of framed links.  In particular, in \cite{GP2} the following theorem is proved.
\begin{theorem} \label{thF'}
 Let $L$ be a framed link colored by $\Uh$-modules such that at least one color is typical.  Cut $L$ to obtain a $(1,1)$-tangle $T_V$ whose open string is colored by a typical module $V$.   Then the map given by $F': L\mapsto d(V)<F(T_{V})>$ is independent of the cut, i.e. $F'$ is a well defined framed colored link invariant.
  \end{theorem}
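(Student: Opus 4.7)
The plan is to reduce the theorem to a comparison between the values $d(V)\langle F(T_V)\rangle$ produced by two different cuts of the same link $L$, and then split into two cases according to whether the two cut points lie on the same component of $L$ or on two distinct typical components. The essential tool in both cases is a \emph{modified trace} $t$ on endomorphisms of typical $\Uh$-modules, characterized by $t_{\qum V}(s\cdot\Id_{\qum V})=s\cdot d(V)$ for simple typical $V$ and extended to projective modules.

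For the same-component case I would split the link at both prospective cut points, obtaining two morphisms $\alpha,\beta\in\T$ such that one cut yields $T_V=\beta\circ\alpha$ while the other yields $T'_V=\alpha\circ\beta$. Since $\qum V$ is simple, $F(T_V)=s\,\Id_{\qum V}$ and $F(T'_V)=s'\,\Id_{\qum V}$ for some $s,s'\in\C[[h]]$. Ordinary trace cyclicity in $\M$ only yields $s\cdot\operatorname{qdim}(\qum V)=s'\cdot\operatorname{qdim}(\qum V)$, which is vacuous since the quantum dimension of a typical module vanishes. The key strengthening, provided by \cite{GP2}, is the cyclicity of the modified trace, $t_{\qum V}(F(\alpha)F(\beta))=t_{\qum V}(F(\beta)F(\alpha))$, which rewrites as $s\cdot d(V)=s'\cdot d(V)$ and forces $s=s'$ because $d(V)\neq 0$.

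For the different-components case, with cuts on strands colored by distinct typical modules $V$ and $W$, I would open $L$ at both points simultaneously to obtain a $(2,2)$-tangle $T_{V,W}$. Closing the $W$-strand recovers $T_V$ and closing the $V$-strand recovers $T_W$, so $F(T_V)$ and $F(T_W)$ are obtained from $F(T_{V,W})\in\End(\qum V\otimes\qum W)$ by the two partial closures. The partial-trace property of the modified trace, also proved in \cite{GP2}, then gives
\[
t_{\qum V}(F(T_V))=t_{\qum V\otimes\qum W}(F(T_{V,W}))=t_{\qum W}(F(T_W)),
\]
which unpacks to $d(V)\langle F(T_V)\rangle=d(W)\langle F(T_W)\rangle$ and completes the proof.

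The hardest step is thus the construction of a function $d$ on typical modules that supports both the cyclicity and partial-trace identities above. The explicit formula for $d$ in \cite{GP2}, together with the renormalization by $h^{|\Delta_{\bar 1}^+|}$ introduced just before the theorem, is chosen precisely so that these two compatibilities hold; once they are in hand, the independence of the cut reduces to the two formal arguments outlined above.
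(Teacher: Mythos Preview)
The paper does not actually prove this theorem: immediately before the statement it says ``In particular, in \cite{GP2} the following theorem is proved,'' and no argument is supplied here. So there is no proof in the present paper to compare your proposal against; the result is imported wholesale from \cite{GP2}.

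Your sketch is nonetheless the correct strategy, and it matches the modified-trace framework introduced in \cite{GP2} and systematized in \cite{GPT}. One small comment on the same-component case: once you have arranged $T_V=\beta\circ\alpha$ and $T'_V=\alpha\circ\beta$ with both compositions landing in $\End(\qum V)$, you have already forced $F(\alpha),F(\beta)\in\End_\M(\qum V)$, and simplicity of $\qum V$ makes these scalars, so $s=s'$ is immediate without any appeal to the modified trace. The specific function $d$ and its trace-like properties are needed only in your second case, where the two open colours differ; there your partial-trace identity $t_{\qum V\otimes\qum W}$ is exactly the compatibility that \cite{GP2} establishes for the explicit $d$, and it is the genuine heart of the theorem.
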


Note that any scalar of $d$ also defines a link invariant.  In \cite{GPT} it is shown that $d$ is the unique function up to a constant such that the assignment in Theorem \ref{thF'} gives a well defined invariant.  In the next section we explain how the Kontsevich integral suggests that there is a natural choice for the scaling of $d$.

\section{The invariant $Q'_{\g,\Vbar}$ and the Kontsevich integral}
Let $\Vbar=\{V_1,V_2,...\}$ be a sequence of finite dimensional $\g$-modules, such that $V_1$ is typical.  Let $L$ be a framed oriented link with ordered components.  Define $Q'_{\g,\Vbar}$ to be the $\C[[h]]$-valued invariant of $L$ given by $F'(L_{\Vbar})$ where $L_{\Vbar}$ is the link $L$ whose $i$th component is colored by $\qum{V}_i$. 

Let us use the following notation:
$$Q'_{\g,\Vbar}=\sum_{m=0}^{\infty}Q_m'h^m, \;\;\; \what{Q}_{\g, \Vbar}= \sum_{m=0}^{\infty}\what{Q}_mh^m, \;\;\; d=\sum_{m=0}^{\infty}d_mh^m.$$
Now $Q'_{\g,\Vbar}(L)=d(\qum{V}_i)\what{Q}_{\g, \Vbar}(T_i)$ where $T_i$ is a $(1,1)$-tangle coming from cutting the $i$th component of $L$.

\begin{lemma}
The coefficient $Q'_m$ is a Vassiliev invariant of type $m$ whose weight system is given by the assignment 
$$
D \mapsto d_0(\qum{V}_i)\what{W}_{\g,\Vbar}(D_i)
$$
where $D_i$ is an element of $\Atangle(1,1)$ coming from cutting the $i$th component of $D$.  We will denote this weight system by $W'_{\g,\Vbar}$.
\end{lemma}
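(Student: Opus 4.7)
The plan is to derive both the Vassiliev type and the weight system of $Q'_m$ directly from the multiplicative relation $Q'_{\g,\Vbar}(L) = d(\qum{V}_i)\what{Q}_{\g, \Vbar}(T_i)$ recalled just before the lemma, together with Theorem \ref{T:QWZ11}. Expanding both sides as power series in $h$ and collecting coefficients gives
$$Q'_m(L) = \sum_{k=0}^{m} d_k(\qum{V}_i)\,\what{Q}_{m-k}(T_i),$$
where the dependence on $L$ enters only through $T_i$, since each $d_k(\qum{V}_i)$ is a scalar independent of the link.

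For part (1), I would observe that cutting a link to form a $(1,1)$-tangle commutes with the double-point expansion, provided the cut is chosen at a regular point of the $i$-th component. By Theorem \ref{T:QWZ11}(\ref{TI:Vas}), $\what{Q}_{m-k}$ is a Vassiliev $(1,1)$-tangle invariant of type $m-k$, so each summand $d_k(\qum{V}_i)\what{Q}_{m-k}(T_i)$ is a Vassiliev link invariant of type at most $m$. Consequently $Q'_m$ is itself a Vassiliev invariant of type $m$.

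For the weight-system statement, evaluate $Q'_m$ on a singular link $L$ with exactly $m$ double points carrying the chord diagram $D$. Cut the $i$-th component at a regular point away from the double points to obtain a singular $(1,1)$-tangle with $m$ double points, whose associated chord diagram is $D_i\in\Atangle(1,1)$. In the sum above, the extension of $\what{Q}_{m-k}$ to this singular tangle vanishes for every $k>0$, because $m-k<m$ and $\what{Q}_{m-k}$ has type $m-k$. Only the $k=0$ term survives, giving
$$Q'_m(L) \;=\; d_0(\qum{V}_i)\,\what{Q}_m(T_i).$$
Applying Theorem \ref{T:QWZ11}(\ref{TI:Vas3}), which identifies the weight system of $\what{Q}_m$ with $\what{W}_{\g,\Vbar}$, yields $\what{Q}_m(T_i)=\what{W}_{\g,\Vbar}(D_i)$, and so the weight system of $Q'_m$ is $D\mapsto d_0(\qum{V}_i)\what{W}_{\g,\Vbar}(D_i)$, as asserted.

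There is essentially no serious obstacle: the only subtlety is verifying that the cut in $L$ can be taken away from both the double points and the chord endpoints so that $T_i$ and $D_i$ are unambiguously defined, and that the resulting value is independent of the choice of cut. Both points are immediate: the double points and chord endpoints form a finite subset of the component, and the independence of the cut follows from Theorem \ref{thF'} together with the evident analogue for the extension to singular links. The lemma therefore reduces to the formal power-series calculation above, and the proof is correspondingly brief.
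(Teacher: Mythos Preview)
Your argument is correct and follows essentially the same route as the paper: expand $Q'_m$ as the convolution $\sum_{k} d_k\,\what{Q}_{m-k}$, use Theorem~\ref{T:QWZ11}\eqref{TI:Vas} to get the Vassiliev type, observe that on a singular link with $m$ double points only the $k=0$ term survives, and then invoke Theorem~\ref{T:QWZ11}\eqref{TI:Vas3} to identify the surviving value with $\what{W}_{\g,\Vbar}(D_i)$. Your additional remarks about choosing the cut away from the double points and the independence of the cut are sensible clarifications but not a departure from the paper's proof.
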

\begin{proof}
First, since $Q'_m(L)=\sum_{j=0}^{m}d_j(\qum{V}_i)\what{Q}_{m-j}(T_i)$ we have that  Theorem \ref{T:QWZ11} implies $Q'_m$ is a Vassiliev invariant of type $m$.  By definition the weight system coming from $Q'_m$ is given by $D\mapsto Q'_m(K_D)$ where $K_D$ is any framed singular link with $m$ double points whose underlying diagram is $D$.  Then
$$Q_m'(K_D)=\sum_{j=0}^{m}d_j(\qum{V}_i)\what{Q}_{m-j}(T_i)=d_0(\qum{V}_i)\what{Q}_m(T_i)$$
since $T_i$ is a singular $(1,1)$-tangle with $m$ double points and $\what{Q}_{m-j}$ is a Vassiliev invariant of type $m-j$.  The lemma is completed by Theorem \ref{T:QWZ11} \eqref{TI:Vas3} which states that for all $D \in \Atangle(1,1)$ with $m$ chords we have $\what{W}_{\g,\Vbar}(D)=\what{Q}_m(L_D)$ where $L_D$ is any framed singular $(1,1)$-tangle whose underlying diagram is $D$.
\end{proof}

\begin{theorem}\label{T:QConical}
The invariant $W'_{\g,\Vbar}$ is canonical, i.e. up to normalization $Q'_{\g,\Vbar}$ is equal to $W'_{\g,\Vbar}\circ Z$.
\end{theorem}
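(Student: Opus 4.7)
The plan is to reduce the claim to the $(1,1)$-tangle case already proved in Theorem \ref{T:QWZ11}\eqref{TI:WZQ}, by cutting the distinguished typical component. Fix a link $L$ colored by $\Vbar$ and let $T_1$ denote the $(1,1)$-tangle obtained from $L$ by cutting its first component, which carries the typical module $\qum{V}_1$. From the definition of $F'$ in Theorem \ref{thF'} we have $Q'_{\g,\Vbar}(L) = d(\qum{V}_1)\,\what{Q}_{\g,\Vbar}(T_1)$, and then Theorem \ref{T:QWZ11}\eqref{TI:WZQ} applied to $T_1$ gives
\[
Q'_{\g,\Vbar}(L) \;=\; d(\qum{V}_1)\,\what{W}_{\g,\Vbar}(Z(T_1)).
\]

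Next I would invoke the standard compatibility of the Kontsevich integral with the cutting operation: if $D_1$ denotes the chord diagram on a $(1,1)$-tangle obtained by cutting the first component of a chord diagram $D$ on $L$, then $Z(L)_1 = Z(T_1)$ in the appropriate completed module of chord diagrams. (Any residual $\nu = Z(U)$-normalization from cutting is absorbed into the evaluation by $\what{W}_{\g,\Vbar}$ itself, using that $\what{W}_{\g,\Vbar}$ is already known to be canonical on $(1,1)$-tangles.) Combining this with the formula $W'_{\g,\Vbar}(D) = d_0(\qum{V}_1)\,\what{W}_{\g,\Vbar}(D_1)$ from the preceding lemma yields
\[
W'_{\g,\Vbar}(Z(L)) \;=\; d_0(\qum{V}_1)\,\what{W}_{\g,\Vbar}(Z(T_1)).
\]

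Dividing the two identities produces
\[
Q'_{\g,\Vbar}(L) \;=\; \frac{d(\qum{V}_1)}{d_0(\qum{V}_1)}\; W'_{\g,\Vbar}(Z(L)),
\]
where the scalar $d(\qum{V}_1)/d_0(\qum{V}_1) \in 1+h\,\C[[h]]$ is a unit in $\C[[h]]$ that depends only on the typical coloring of the cut component and not on $L$. This is the asserted equality up to normalization, establishing that $W'_{\g,\Vbar}$ is canonical.

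The main obstacle is justifying the cut/closure compatibility of $Z$ with sufficient care for the present graded setting: since $d(\qum{V}_1)$ is itself a nontrivial power series in $h$ rather than the constant scalar appearing in the classical Le--Murakami framework, one must check that no $h$-grading shifts arise when one identifies the cut of $Z(L)$ with $Z(T_1)$, and that the identification is consistent with how $W'_{\g,\Vbar}$ was defined to factor through $\what{W}_{\g,\Vbar}$. Once this intertwining is in place, the rest of the proof is purely bookkeeping using the preceding lemma and Theorem \ref{T:QWZ11}.
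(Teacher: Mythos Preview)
Your approach is essentially the paper's: both cut the typical first component, reduce to Theorem~\ref{T:QWZ11}\eqref{TI:WZQ} on the resulting $(1,1)$-tangle, and then compare the expression obtained for $W'_{\g,\Vbar}\circ Z$ with the defining formula $Q'_{\g,\Vbar}(L)=d(\qum{V}_1)\,\what{Q}_{\g,\Vbar}(T_1)$. The paper packages the closure compatibility as $H(L)=H(U_{\qum{V}_1})\langle H(T_{\qum{V}_1})\rangle$, so that the $\nu$-correction is absorbed into the unknot value $(W'_{\g,\Vbar}\circ Z)(U_{\qum{V}_1})$, and then it invokes the uniqueness of $d$ (from \cite{GPT}) to conclude this unknot value is a scalar multiple of $d(\qum{V}_1)$. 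Your direct division avoids the appeal to uniqueness, which is a minor simplification; however, your claim that the $\nu$-factor is ``absorbed'' is not quite right: the displayed identity $W'_{\g,\Vbar}(Z(L))=d_0(\qum{V}_1)\,\what{W}_{\g,\Vbar}(Z(T_1))$ is missing a factor of $\what{W}_{\g,\Vbar}(\nu)$, so the actual normalization constant is $d(\qum{V}_1)\big/\bigl(d_0(\qum{V}_1)\,\what{W}_{\g,\Vbar}(\nu)\bigr)$ rather than $d(\qum{V}_1)/d_0(\qum{V}_1)$. This does not affect the conclusion, and in fact $d_0(\qum{V}_1)\,\what{W}_{\g,\Vbar}(\nu)=(W'_{\g,\Vbar}\circ Z)(U_{\qum{V}_1})$, which recovers exactly the paper's normalization and its remark that the unknot value is the natural choice.
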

\begin{proof}
Let $L$ be a link whose $i$th component is colored with $\qum{V}_i=V_i[[h]]$.  Then  the assignment 
\begin{equation}\label{E:H}
L\mapsto H(L), \text{ given by } H(L)=(W'_{\g,\Vbar}\circ Z)(L)
\end{equation}
is a well defined invariant of the colored link $L$.  Here in the right side of the equality in Equation \eqref{E:H} we ignore the coloring of $L$.  In the rest of the proof in similar situations we will ignore the coloring of links and  
tangles.

Recall that $V_1$ is a typical module.  Suppose $L$ is equal to the closure of a $(1,1)$-tangle $T_{\qum{V}_1}$ whose open string is the first component of $L$.  Then
$$H(T_{\qum{V}_1}):=(G_{\g,\Vbar}\circ Z)(T_{\qum{V}_1})$$
is an endomorphism of $\qum{V}_1$ which satisfies $H(L)=H(U_{\qum{V}_1})<H(T_{\qum{V}_1})>$ where $U_{\qum{V}_1}$ is the unknot colored with $\qum{V}_1$.  Then we have
\begin{align*}
(W'_{\g,\Vbar}\circ Z)(L)&=H(U_{\qum{V}_1})<H(T_{\qum{V}_1})>\\
&=(W'_{\g,\Vbar}\circ Z)(U_{\qum{V}_1})<(G_{\g,\Vbar}\circ Z)(T_{\qum{V}_1})>\\
&=(W'_{\g,\Vbar}\circ Z)(U_{\qum{V}_1})(\what{W}_{\g,\Vbar}\circ Z)(T_{\qum{V}_1})\\
&=(W'_{\g,\Vbar}\circ Z)(U_{\qum{V}_1})\what{Q}_{\g, \Vbar}(T_{\qum{V}_1})
\end{align*}
where the last equality follows from Theorem \ref{T:QWZ11} \eqref{TI:WZQ}.  Finally,  the uniqueness of the invariant in Theorem \ref{thF'} implies that $d(\qum{V}_1)$ must be equal to a multiple of $(W'_{\g,\Vbar}\circ Z)(U_{\qum{V}_1})$.  Thus, $W'_{\g,\Vbar}$ is canonical.
\end{proof}
The proof of the theorem suggests that $(W'_{\g,\Vbar}\circ Z)(U_{\qum{V}_1})$ is a natural choice of the normalization of $Q'_{\g,\Vbar}$.   

\linespread{1}

\vfill

\end{document}